\documentclass[11pt,a4paper]{article}
\usepackage[utf8]{inputenc}
\usepackage[T1]{fontenc}
\usepackage[english]{babel}
\usepackage{amsmath, amssymb, amsthm, mathrsfs}
\usepackage{geometry}

\usepackage[colorlinks=true, linkcolor=blue, citecolor=blue, urlcolor=blue]{hyperref}

\newtheorem{theorem}{Theorem}[section]
\newtheorem{lemma}[theorem]{Lemma}
\newtheorem{proposition}[theorem]{Proposition}

\theoremstyle{definition}
\newtheorem{definition}[theorem]{Definition}
\newtheorem{example}[theorem]{Example}
\newtheorem{remark}[theorem]{Remark}

\title{On the validity of Tonelli's Theorem}
\author{P.~Roselli and M.~Willem}
\date{\today}

\begin{document}

\maketitle

\begin{abstract}
This work analyzes the validity of Tonelli's theorem, overcoming the traditional assumption of $\sigma$-finiteness. 
We prove that the existence and equality of iterated integrals for indicator functions is a necessary and sufficient condition to define a product measure that allow to extend Tonelli's Theorem to more general measure spaces, including $s$-finite measures. Finally, we characterize the semi-finiteness of such a product measure.
\end{abstract}

\section{Introduction}
The existence and equality of iterated integrals provides a natural tool for the definition of the product measure of a set $A\subseteq X\times Y$ contained in the $\sigma$-algebra $\Sigma_X\otimes\Sigma_Y$ generated by the measurable rectangles~$\Sigma_X\times\Sigma_Y$:
\begin{equation}\label{eq:misura da integrali iterati}
(\mu\otimes\nu)_{(A)}
=
\int_Y\left(\int_X {\chi_A}_{(x,y)}\, d\mu_{(x)}\right)d\nu_{(y)}
=
\int_X\left(\int_Y {\chi_A}_{(x,y)}\, d\nu_{(y)}\right)d\mu_{(x)}\ ,
\end{equation}
where $\chi_A$ is the indicator function of $A$, and $(X,\Sigma_X,\mu)$ and $(Y,\Sigma_Y,\nu)$ are measure spaces.
This definition of product measure is used, for example, in the classical texts of Saks~\cite{saks1937}, Halmos~\cite{Halmos1950}, Rudin~\cite{Rudin1987}, and Brokate and Kersting~\cite{brokate2015}.
Unfortunately, these authors assume a priori that the measures $\mu$ and $\nu$ are $\sigma$-finite.
But the existence and equality of the iterated integrals%~(\ref{eq:misura da integrali iterati})
\begin{equation}\label{eq:integrali iterati}
\int_Y\left(\int_X {\chi_A}_{(x,y)}\, d\mu_{(x)}\right)d\nu_{(y)}
=
\int_X\left(\int_Y {\chi_A}_{(x,y)}\, d\nu_{(y)}\right)d\mu_{(x)}\ ,
\end{equation}
holds also for pairs of $s$-finite measures, as observed by Falkner in~\cite{falkner2009review} and~\cite{falkner2019}.
Indeed, $\sigma$-finiteness is not necessary for the validity of~(\ref{eq:integrali iterati}).
In this work we define the notion of an \emph{abelian pair} of measures: a pair $(\mu,\nu)$ of measures is, by definition, \emph{abelian} if~(\ref{eq:integrali iterati}) holds for any set $A\in\Sigma_X\otimes\Sigma_Y$.
Theorem~\ref{thm:abelianità e funzioni} states that a pair $(\mu,\nu)$ is abelian if and only if for every function $f:X\times Y \rightarrow[0,\infty]$ that is $\Sigma_X\otimes\Sigma_Y$-measurable, the iterated integrals exist and coincide:
\begin{equation}\label{eq:identità tra integrali di funzioni}
\int_Y\left(\int_X f_{(x,y)}\, d\mu_{(x)}\right)d\nu_{(y)}
=
\int_X\left(\int_Y f_{(x,y)}\, d\nu_{(y)}\right)d\mu_{(x)}\ .
\end{equation}
%However, even in the case of finite measures ($\mu(X)<\infty$, $\nu(Y)<\infty$), the proof of~(\ref{eq:identità tra integrali di funzioni}) is not trivial.
%Instead, Theorem~\ref{thm:abelianità 1} proposes a characterization of abelian pairs that makes the existence and equality of the two preceding integrals immediate in the case of finite measures.
In Section~\ref{sec:misura prodotto} we define the product measure $\mu\otimes\nu$ associated with the abelian pair $(\mu,\nu)$ through the relation~(\ref{eq:misura da integrali iterati}).
In this optimal framework, the classical theorems of Tonelli and Fubini then follow from~(\ref{eq:identità tra integrali di funzioni}).
In Section~\ref{sect:semifinitezza} we prove that for an abelian pair $(\mu,\nu)$, the measures $\mu$ and $\nu$ are semi-finite if and only if the measure $\mu\otimes\nu$ is semi-finite.

\section{Iterated integrals}\label{sect:classe commutativa}

Let $\Omega$ be a set, we denote by the symbol $\Sigma_\Omega$ a $\sigma$-algebra of subsets of $\Omega$. We will call the pair $(\Omega, \Sigma_\Omega)$ a \emph{measurable space}. 
Let $\overline{\mathbb{R}} = \mathbb{R} \cup \{-\infty, +\infty\}$. Given a measurable space $(\Omega, \Sigma_\Omega)$, a function $u: \Omega \to \overline{\mathbb{R}}$ is said to be \emph{$\Sigma_\Omega$-measurable} if for every $t\in\mathbb{R}$ one has $\{\omega \in \Omega \mid u(\omega) > t\} \in \Sigma_\Omega$.
Given two measurable spaces $(X, \Sigma_X)$ and $(Y, \Sigma_Y)$, we denote by $\Sigma_X \otimes \Sigma_Y$ the $\sigma$-algebra generated by the measurable rectangles $\Sigma_X \times \Sigma_Y$, that is, the smallest $\sigma$-algebra containing the family $\Sigma_X \times \Sigma_Y$ of rectangles $E \times F$ (where $E\in\Sigma_X$ and $F\in\Sigma_Y$).

\begin{remark}
It will be useful to characterize $\Sigma_X \otimes \Sigma_Y$ as the smallest family $\mathscr{D}$ of subsets of $X \times Y$ that satisfies 
\begin{enumerate}
    \item[($\alpha$)] $\Sigma_X \times \Sigma_Y \subseteq \mathscr{D}$;
    \item[($\beta$)] $\mathscr{D}$ is closed under countable unions: if $\{A_n\}_{n=1}^\infty\subset\mathscr{D}$, then $\displaystyle \bigcup_{n=1}^\infty A_n\in\mathscr{D}$;
    \item[($\gamma$)] $\mathscr{D}$ is closed under set complementation: if $A\in \mathscr{D}$, then $A^c=(X\times Y)\setminus A\in\mathscr{D}$.
\end{enumerate}
\end{remark}

Let $A \subseteq X \times Y$ be a subset of the product; we define the sections of $A$ as follows:
\begin{itemize}
	\item $A^y = \{x \in X \mid (x,y) \in A\}$;
	\item $A_x = \{y \in Y \mid (x,y) \in A\}$.
\end{itemize}

\begin{lemma}\label{lem:sezioni misurabili}
If $A\in \Sigma_X \otimes \Sigma_Y$ then, 
\begin{itemize}
	\item for every $y\in Y$, $A^y\in\Sigma_X$;
	\item for every $x\in X$, $A_x\in\Sigma_Y$.
\end{itemize}
\end{lemma}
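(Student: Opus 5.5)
The plan is the standard ``good sets'' argument, using the characterization of $\Sigma_X\otimes\Sigma_Y$ in the Remark as the smallest family satisfying ($\alpha$), ($\beta$), ($\gamma$). Fix $y\in Y$ and consider
\[
\mathscr{D}_y=\{A\subseteq X\times Y \mid A^y\in\Sigma_X\}.
\]
I will show that $\mathscr{D}_y$ satisfies ($\alpha$), ($\beta$) and ($\gamma$). By minimality this gives $\Sigma_X\otimes\Sigma_Y\subseteq\mathscr{D}_y$, which is exactly the first assertion. The second assertion follows by the symmetric argument with $\mathscr{D}_x=\{A \mid A_x\in\Sigma_Y\}$ for fixed $x\in X$.

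The key steps are elementary identities for sections.

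\textbf{($\alpha$)} For a rectangle $E\times F$ with $E\in\Sigma_X$, $F\in\Sigma_Y$, the section $(E\times F)^y$ equals $E$ if $y\in F$ and $\emptyset$ if $y\notin F$. Both lie in $\Sigma_X$.

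\textbf{($\beta$)} Sections commute with unions:
\[
\Bigl(\bigcup_{n}A_n\Bigr)^y=\bigcup_n A_n^y.
\]
So if every $A_n^y\in\Sigma_X$, the union is also in $\Sigma_X$, because $\Sigma_X$ is a $\sigma$-algebra.

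\textbf{($\gamma$)} Sections commute with complements: $\bigl((X\times Y)\setminus A\bigr)^y = X\setminus A^y$, since $x$ belongs to the left side iff $(x,y)\notin A$. This set lies in $\Sigma_X$ whenever $A^y$ does.

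Each identity is verified directly by checking membership of a point $x$ on both sides.

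There is no real obstacle. The only point needing care is to invoke the minimality in the Remark correctly: the Remark identifies $\Sigma_X\otimes\Sigma_Y$ with the \emph{smallest} family satisfying ($\alpha$)--($\gamma$), so any family with these three properties contains it. Equivalently, one can note that $\mathscr{D}_y$ is itself a $\sigma$-algebra containing the rectangles, which also gives the inclusion.
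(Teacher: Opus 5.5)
Your proposal is correct and matches the argument the paper relies on. The paper only cites Theorem~A of Section~34 in Halmos, and that proof is this same good-sets argument: the class of sets whose sections are measurable contains the rectangles and is closed under complements and countable unions, hence contains $\Sigma_X\otimes\Sigma_Y$. Fixing $y$ (resp.\ $x$), as you do, rather than treating all sections at once, changes nothing.
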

\begin{proof}
See Theorem A of Section~34 in~\cite{Halmos1950}.
\end{proof}

\begin{definition}
A \emph{measure} on the measurable space $(\Omega,\Sigma_\Omega)$ is a function $\mu: \Sigma_\Omega\longrightarrow [0,+\infty]$ such that
\begin{itemize}
	\item $\mu_{(\emptyset)}=0$;
	\item for every sequence $(A_n)$ in $\Sigma_\Omega$ of disjoint sets, 
	$\displaystyle \mu_{\big(\bigsqcup_{n} A_n\big)}=\sum_{n}\mu_{(A_n)}$.
\end{itemize}
In such a case, the triple $(\Omega,\Sigma_\Omega,\mu)$ is called a \emph{measure space}.
\end{definition}

\begin{definition}\label{def:classe_A}
Let $(X, \Sigma_X, \mu)$ and $(Y, \Sigma_Y, \nu)$ be measure spaces. We denote by $\mathscr{A}(\mu, \nu)$ the class of subsets $A \in \Sigma_X \otimes \Sigma_Y$ such that
\begin{enumerate}
	\item[(i)] the function $y \mapsto \mu_{(A^y)}$ is $\Sigma_Y$-measurable;
	\item[(ii)] the function $x \mapsto \nu_{(A_x)}$ is $\Sigma_X$-measurable;
	\item[(iii)] $\displaystyle 
	\int_Y \mu_{(A^y)}\, d\nu_{(y)} = \int_X \nu_{(A_x)}\, d\mu_{(x)}$.
\end{enumerate}
\label{def:commutano}
The pair $(\mu,\nu)$ is said to be \emph{abelian} if
\[
\mathscr{A}(\mu,\nu)
=
\Sigma_X\otimes\Sigma_Y
\]
\end{definition}

\begin{example}\label{exe:non abeliana 1}
Let $X = Y = [0,1]$ and let $\Sigma_X =\Sigma_Y= \mathcal{B}\big([0,1]\big)$ be the $\sigma$-algebra of Borel sets of $[0,1]$.
Consider any set $S\subset[0,1]$ such that $S\not\in\mathcal{B}\big([0,1]\big)$, and define for every $B\in\mathcal{B}\big([0,1]\big)$
\begin{center}
$
\mu_{(B)}
=
\nu_{(B)}
=
\#_{(B\cap S)}$,
\end{center}
where $\#$ denotes the counting measure. Consider the diagonal of the unit square: $\Delta = \big\{(x,x)\mid x\in[0,1]\big\}$. Since $\Delta$ is closed, $\Delta\in\mathcal{B}\big([0,1]\big)\otimes\mathcal{B}\big([0,1]\big)$. Therefore, for every $y\in[0,1]$, we have that
\begin{center}
$
\mu_{(\Delta^y)}
=
{\chi_S}_{(y)}
$.
\end{center}
Since $S\not\in\mathcal{B}\big([0,1]\big)$, then the function
\begin{center}
$
y \longmapsto \mu_{(\Delta^y)}={\chi_S}_{(y)}
$
\end{center}
is not $\mathcal{B}\big([0,1]\big)$-measurable and condition (i) of Definition~\ref{def:commutano} is not satisfied. Hence, the pair $(\mu,\mu)$ is not abelian. 
\end{example}

\begin{example}\label{exe:non abeliana 3}
Let $X = Y = [-1,1]$ and let $\Sigma_X =\Sigma_Y= \mathcal{B}\big([-1,1]\big)$.
For every $B\in\mathcal{B}\big([0,1]\big)$ we define
\begin{center}
$
\mu_{(B)}
=
\nu_{(B)}
=
\lambda_{(B\cap [-1,0])}+\#_{(B\cap (0,1])}
$,
\end{center}
where $\lambda$ denotes the Lebesgue measure. Consider the Borel set
\begin{center}
$
A
=
\{ (x,y)\in [-1,0]\times[0,1]\mid y=x+1 \}
$.
\end{center}
For every $y\in[0,1]$ and for every $x\in[-1,0]$, we have that 
\begin{center}
$
\mu_{(A^y)}=0
$,
$
\mu_{(A_x)}=1
$.
\end{center}
Therefore, the iterated integrals are well-defined. However, they are not equal
\begin{center}
$
\displaystyle
\int_{Y} \mu_{(A^y)}\ d\mu_{(y)}
=0 \ne 1 =
\int_{X} \mu_{(A_x)}\ d\mu_{(x)}
$.
\end{center}
Therefore, condition (iii) of Definition~\ref{def:commutano} is not satisfied, and the pair $(\mu,\mu)$ is not abelian. 
\end{example}

\begin{remark}
The previous two examples show that the abelianity relation between measures, despite being symmetric, is not reflexive. 
\end{remark}

\begin{example}
Consider the measure space $(X, \Sigma_X, \mu)=(\mathbb{N}, \mathcal{P}(\mathbb{N}), \#)$, and $(Y, \Sigma_Y, \nu)$ \emph{any} measure space. Let $A \in \mathcal{P}(\mathbb{N}) \otimes \Sigma_Y$. By Lemma~\ref{lem:sezioni misurabili}, for every $n \in \mathbb{N}$ we have that 
\begin{center}
$
A_n = \big\{y \in Y \mid (n, y) \in A\big\}\in\Sigma_Y
$.
\end{center}
The Monotone Convergence Theorem implies that 
\begin{align*}
\int_{\mathbb{N}} \nu_{(A_n)} \, d\#_{(n)} 
& = \sum_{n=1}^\infty \nu_{(A_n)}\\
& = \sum_{n=1}^\infty \int_{Y} {\chi_{A_n}}_{(y)} \, d\nu_{(y)}\\
& =  \int_{Y} \sum_{n=1}^\infty{\chi_{A_n}}_{(y)} \, d\nu_{(y)}\\
& = \int_{Y} \#_{(A^y)} \, d\nu_{(y)}
\end{align*}
Consequently, the pair $(\#,\nu)$ is abelian.
\end{example}

\begin{lemma}\label{lem:proprietà A}
Let $(X, \Sigma_X, \mu)$ and $(Y, \Sigma_Y, \nu)$ be measure spaces. Then,
\begin{enumerate}
	\item[($\alpha$)] $\Sigma_X\times\Sigma_Y\subset \mathscr{A}(\mu,\nu)$
	\item[($\beta$)] for every sequence $(A_n)$ of disjoint elements in $\mathscr{A}(\mu,\nu)$ (that is, $i\ne j \Rightarrow A_i\cap A_j =\emptyset$), we have that $\displaystyle A=\bigsqcup_{n=1}^\infty A_n\in\mathscr{A}(\mu,\nu)$, and 
	\[
  \int_Y \mu_{(A^y)} \, d\nu_{(y)} 
	= \sum_{n=1}^\infty \int_Y \mu_{\big((A_n)^y\big)} d\nu_{(y)} 
  = \sum_{n=1}^\infty \int_X \nu_{\big((A_n)_x\big)} d\mu_{(x)} 
	= \int_X \nu_{(A_x)} \, d\mu_{(x)}
	\]
\end{enumerate}
\end{lemma}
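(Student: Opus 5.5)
For ($\alpha$), I would take a rectangle $A=E\times F$ with $E\in\Sigma_X$, $F\in\Sigma_Y$ and compute the sections directly. We have $A^y=E$ if $y\in F$ and $A^y=\emptyset$ otherwise, so $\mu_{(A^y)}=\mu_{(E)}\,{\chi_F}_{(y)}$. This is a nonnegative constant (possibly $+\infty$) times the indicator of a measurable set, hence $\Sigma_Y$-measurable. Symmetrically, $\nu_{(A_x)}=\nu_{(F)}\,{\chi_E}_{(x)}$ is $\Sigma_X$-measurable. Both iterated integrals equal $\mu_{(E)}\nu_{(F)}$, with the convention $0\cdot\infty=0$. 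This gives (i)--(iii).

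For ($\beta$), let $(A_n)$ be disjoint in $\mathscr{A}(\mu,\nu)$ and $A=\bigsqcup_n A_n$. First, $A\in\Sigma_X\otimes\Sigma_Y$, since this family is a $\sigma$-algebra. Next, sections commute with unions, and for each fixed $y$ the sets $(A_n)^y$ are pairwise disjoint: $A^y=\bigsqcup_n (A_n)^y$. These sets lie in $\Sigma_X$ by Lemma~\ref{lem:sezioni misurabili}. Countable additivity of $\mu$ then gives
\[
\mu_{(A^y)}=\sum_{n=1}^\infty \mu_{\big((A_n)^y\big)}\qquad\text{for every }y\in Y .
\]
The function $y\mapsto\mu_{(A^y)}$ is therefore a pointwise limit of the increasing partial sums of the $\Sigma_Y$-measurable nonnegative functions $y\mapsto\mu_{((A_n)^y)}$, so it is $\Sigma_Y$-measurable. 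The same argument with $\nu$ and $x$-sections gives (ii).

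For (iii), I would apply the Monotone Convergence Theorem (termwise integration of nonnegative series) to interchange sum and integral, which gives the first equality of the displayed chain. The middle equality holds term by term, because each $A_n$ satisfies (iii). The last equality is again the Monotone Convergence Theorem, applied on $X$. All quantities lie in $[0,\infty]$, so no convergence issues arise.

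There is no serious obstacle here. The only points needing care are that measurability of the limit uses only that pointwise suprema of measurable $[0,\infty]$-valued functions are measurable, and that the $0\cdot\infty$ convention must be used consistently in ($\alpha$).
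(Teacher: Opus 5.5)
Your proposal is correct and follows essentially the same route as the paper: direct computation of the rectangle sections for ($\alpha$), then, for ($\beta$), $\sigma$-additivity of $\mu$ and $\nu$ on the disjoint sections, measurability of the resulting series, and two applications of the Monotone Convergence Theorem around the termwise identity (iii). You also make explicit two points the paper leaves implicit: that $A\in\Sigma_X\otimes\Sigma_Y$, and that the convention $0\cdot\infty=0$ is needed in ($\alpha$). Both are harmless additions.
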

\begin{proof}[Proof of ($\alpha$)] Let $R = E \times F \in \Sigma_X \times \Sigma_Y$. Fixing $y \in Y$, the section is $R^y = E$ if $y \in F$, and $\emptyset$ if $y \notin F$. Thus, $\mu_{(R^y)} = \mu_{(E)}\chi_F(y)$. Since $F \in \Sigma_Y$, the function $y \mapsto \mu_{(R^y)}$ is $\Sigma_Y$-measurable.
Similarly, fixing $x \in X$, the section is $R_x = F$ if $x \in E$, and $\emptyset$ otherwise.
Thus, $\nu_{(R_x)} = \nu_{(F)}\chi_E(x)$, which is $\Sigma_X$-measurable.
Computing the integrals:
\begin{align*}
    \int_Y \mu_{(R^y)}\, d\nu_{(y)} &= \int_Y \mu_{(E)}{\chi_F}_{(y)}\, d\nu_{(y)} = \mu_{(E)}\nu_{(F)} \\
    \int_X \nu_{(R_x)}\, d\mu_{(x)} &= \int_X \nu_{(F)}{\chi_E}_{(x)}\, d\mu_{(x)} = \nu_{(F)}\mu_{(E)}
\end{align*}
The integrals are equal, therefore $R \in \mathscr{A}(\mu,\nu)$.
\end{proof}

\begin{proof}[Proof of ($\beta$)] Let $A = \bigsqcup_{n=1}^\infty A_n$ be a countable disjoint union of elements in $\mathscr{A}(\mu,\nu)$. Fixing $y \in Y$, the section $A^y = \bigsqcup_{n=1}^\infty (A_n)^y$ is a disjoint union of elements of $\Sigma_X$.
By the $\sigma$-additivity of $\mu$:
$$ 
\mu_{(A^y)} = \sum_{n=1}^\infty \mu_{\big((A_n)^y\big)} $$
Being the sum of a series of measurable functions, the function $y \mapsto \mu_{(A^y)}$ is $\Sigma_Y$-measurable.
Similarly, $x \mapsto \nu_{(A_x)} = \sum_{n=1}^\infty \nu_{\big((A_n)_x\big)}$ is $\Sigma_X$-measurable.
Applying the Monotone Convergence Theorem to swap the series and the integral, we obtain:
\begin{align*}
    \int_Y \mu_{(A^y)} \, d\nu_{(y)} 
		&= \int_Y \sum_{n=1}^\infty \mu_{\big((A_n)^y\big)} d\nu_{(y)} = \sum_{n=1}^\infty \int_Y \mu_{\big((A_n)^y\big)} d\nu_{(y)} \\
    &= \sum_{n=1}^\infty \int_X \nu_{\big((A_n)_x\big)} d\mu_{(x)} = \int_X \sum_{n=1}^\infty \nu_{\big((A_n)_x\big)} d\mu_{(x)} = \int_X \nu_{(A_x)} \, d\mu_{(x)}
\end{align*}
Thus $A \in \mathscr{A}(\mu,\nu)$.
\end{proof}

\begin{theorem}\label{thm:abelianità 1}
Let $(X, \Sigma_X, \mu)$ and $(Y, \Sigma_Y, \nu)$ be two measure spaces. The following statements are equivalent:
\begin{enumerate}
	\item[(a)] for every $A\in\mathscr{A}(\mu,\nu)$, $A^c\in\mathscr{A}(\mu,\nu)$;
	\item[(b)] the pair $(\mu,\nu)$ is abelian.
\end{enumerate}
\end{theorem}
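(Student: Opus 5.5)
The implication (b)$\Rightarrow$(a) is immediate. If $(\mu,\nu)$ is abelian, then $\mathscr{A}(\mu,\nu)=\Sigma_X\otimes\Sigma_Y$. This is a $\sigma$-algebra, so it is closed under complementation, which is exactly (a).

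For (a)$\Rightarrow$(b), the plan is to show that $\mathscr{A}(\mu,\nu)$ contains $\Sigma_X\otimes\Sigma_Y$; the reverse inclusion holds by definition. I would use Dynkin's $\pi$–$\lambda$ theorem, with $\mathscr{A}(\mu,\nu)$ as the $\lambda$-system and $\Sigma_X\times\Sigma_Y$ as the $\pi$-system.

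First, $\Sigma_X\times\Sigma_Y$ is closed under finite intersections, since $(E\times F)\cap(E'\times F')=(E\cap E')\times(F\cap F')$. By Lemma~\ref{lem:proprietà A}($\alpha$) it is contained in $\mathscr{A}(\mu,\nu)$. Second, $\mathscr{A}(\mu,\nu)$ is a $\lambda$-system, which takes three checks:
\begin{itemize}
\item $X\times Y\in\mathscr{A}(\mu,\nu)$, because it is a rectangle.
\item $\mathscr{A}(\mu,\nu)$ is closed under countable disjoint unions, by Lemma~\ref{lem:proprietà A}($\beta$).
\item $\mathscr{A}(\mu,\nu)$ is closed under complements, by hypothesis (a).
\end{itemize}
If one prefers the formulation of $\lambda$-systems with proper differences: for $A\subseteq B$ both in $\mathscr{A}(\mu,\nu)$, write
\[
B\setminus A=\bigl(A\sqcup B^c\bigr)^c .
\]
Here $A$ and $B^c$ are disjoint, so $A\sqcup B^c$ lies in $\mathscr{A}(\mu,\nu)$ by (a) and ($\beta$), and then its complement does too by (a). The $\pi$–$\lambda$ theorem then gives $\sigma(\Sigma_X\times\Sigma_Y)=\Sigma_X\otimes\Sigma_Y\subseteq\mathscr{A}(\mu,\nu)$, which proves (b).

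The only real obstacle is that the Remark's characterization of $\Sigma_X\otimes\Sigma_Y$ asks for closure under arbitrary countable unions. Our hypotheses give closure only under disjoint unions, and without closure under intersections one cannot disjointify directly.

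If the $\pi$–$\lambda$ theorem is to be avoided, I would prove it inline in the usual way. Let $\mathscr{L}$ be the smallest family containing the rectangles and closed under complements and disjoint countable unions; then $\mathscr{L}\subseteq\mathscr{A}(\mu,\nu)$. For a rectangle $R$, the family $\{A\in\mathscr{L}: A\cap R\in\mathscr{L}\}$ contains the rectangles and has the same two closure properties. Closure under complements uses
\[
A^c\cap R=\bigl((A\cap R)\sqcup R^c\bigr)^c .
\]
Hence this family is all of $\mathscr{L}$. Bootstrapping once more, with $R$ replaced by an arbitrary element of $\mathscr{L}$, shows that $\mathscr{L}$ is closed under finite intersections. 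A general union $\bigcup_n A_n$ then equals the disjoint union of the sets $A_n\cap A_1^c\cap\dots\cap A_{n-1}^c$, each of which lies in $\mathscr{L}$. So $\mathscr{L}$ satisfies ($\alpha$), ($\beta$), ($\gamma$) of the Remark, and therefore $\Sigma_X\otimes\Sigma_Y\subseteq\mathscr{L}\subseteq\mathscr{A}(\mu,\nu)$.
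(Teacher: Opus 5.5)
Your proof is correct and follows the paper's argument. The implication (b)$\Rightarrow$(a) is immediate, and for (a)$\Rightarrow$(b) you show that $\mathscr{A}(\mu,\nu)$ is a Dynkin system containing the $\pi$-system of measurable rectangles and then apply the $\pi$--$\lambda$ theorem, exactly as the paper does; your inline proof of that theorem is also correct and simply replaces the paper's citation of the result from Brokate and Kersting.
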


\begin{proof}[Proof of (a)$\Rightarrow$(b)]
The family of measurable rectangles $\mathscr{R} = \Sigma_X \times \Sigma_Y$ is stable under finite intersections and generates $\Sigma_X \otimes \Sigma_Y$.
By hypothesis (a), the class $\mathscr{A}(\mu,\nu)$ is closed under complementation. 
Combining this hypothesis with the results of the previous Lemma, we know that $\mathscr{A}(\mu,\nu)$ contains the entire space $X \times Y$ \big(thanks to property ($\alpha$) of Lemma~\ref{lem:proprietà A}\big), and is closed under countable disjoint unions.
These three properties define a \textit{Dynkin System}.
Since $\mathscr{A}(\mu,\nu)$ is a Dynkin system that includes $\mathscr{R}$, by Proposition 7.2 in~\cite{brokate2015}, we deduce that
%the inclusion extends to the entire generated $\sigma$-algebra:
$$ \mathscr{A}(\mu,\nu) = \Sigma_X \otimes \Sigma_Y \ .
$$
\end{proof}

\begin{proof}[Proof of (b)$\Rightarrow$(a)]
By definition, $\mathscr{A}(\mu,\nu)=\Sigma_X\otimes\Sigma_Y$, and $\Sigma_X\otimes\Sigma_Y$ is a $\sigma$-algebra.
\end{proof}

\begin{theorem} \label{thm:finite_commute} \label{lem:complementari}
If $\mu$ and $\nu$ are both finite measures, the pair $(\mu,\nu)$ is abelian.
\end{theorem}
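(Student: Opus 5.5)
By Theorem~\ref{thm:abelianità 1}, it suffices to verify condition (a): if $A\in\mathscr{A}(\mu,\nu)$, then $A^c\in\mathscr{A}(\mu,\nu)$. The plan is to express the section functions of $A^c$ as differences of finite quantities and read off measurability and the equality of the iterated integrals directly. Finiteness of $\mu$ and $\nu$ is used precisely to make these subtractions legitimate.

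First, fix $A\in\mathscr{A}(\mu,\nu)$. By Lemma~\ref{lem:sezioni misurabili}, $A^c\in\Sigma_X\otimes\Sigma_Y$ and its sections are measurable. For every $y\in Y$ we have $(A^c)^y=X\setminus A^y$. Since $\mu_{(X)}<\infty$, this gives
\[
\mu_{\big((A^c)^y\big)}=\mu_{(X)}-\mu_{(A^y)} .
\]
The right-hand side is the difference of a constant and a $\Sigma_Y$-measurable function with values in $[0,\mu_{(X)}]$, so it is $\Sigma_Y$-measurable. In the same way, $\nu_{((A^c)_x)}=\nu_{(Y)}-\nu_{(A_x)}$ is $\Sigma_X$-measurable. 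This settles conditions (i) and (ii) for $A^c$.

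For condition (iii), the functions $y\mapsto\mu_{(A^y)}$ and $x\mapsto\nu_{(A_x)}$ are bounded by $\mu_{(X)}$ and $\nu_{(Y)}$ respectively. Since both measures are finite, these functions are integrable, and their integrals are at most $\mu_{(X)}\nu_{(Y)}<\infty$. Linearity of the integral for integrable functions then gives
\[
\int_Y \mu_{\big((A^c)^y\big)}\,d\nu_{(y)}=\mu_{(X)}\nu_{(Y)}-\int_Y\mu_{(A^y)}\,d\nu_{(y)},
\qquad
\int_X \nu_{\big((A^c)_x\big)}\,d\mu_{(x)}=\nu_{(Y)}\mu_{(X)}-\int_X\nu_{(A_x)}\,d\mu_{(x)} .
\]
The subtracted integrals are equal because $A\in\mathscr{A}(\mu,\nu)$, so the two left-hand sides coincide. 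Hence $A^c\in\mathscr{A}(\mu,\nu)$, condition (a) holds, and Theorem~\ref{thm:abelianità 1} shows that the pair $(\mu,\nu)$ is abelian.

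The only delicate point is the subtraction step. It needs every quantity to be finite, which is exactly where the hypothesis enters, and it would fail in Example~\ref{exe:non abeliana 3}, where the counting part of the measure is infinite. The measurability check is routine once the values are known to be finite, so there is no genuine obstacle.
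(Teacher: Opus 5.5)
Your proof is correct and follows the paper's route: you show that $\mathscr{A}(\mu,\nu)$ is closed under complements via $\nu_{((A^c)_x)}=\nu_{(Y)}-\nu_{(A_x)}$ and its symmetric counterpart, using finiteness to justify the subtractions before and after integrating. You then conclude abelianity from the complementation criterion. Your version is somewhat more explicit than the paper's, since you verify the measurability conditions (i) and (ii) for $A^c$ and state the appeal to that criterion, both of which the paper leaves implicit.
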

\begin{proof}
The section of the complement is $(A^c)_x = Y \setminus A_x$.
Since $\nu$ is finite,
$$ 
\nu_{\big((A^c)_x\big)}
=
\nu_{(Y)} - \nu_{(A_x)} \ .
$$ 
Integrating, we obtain
$$
\int_X \nu_{\big((A^c)_x\big)} \, d\mu_{(x)} = \mu_{(X)}\nu_{(Y)} - \int_X \nu_{(A_x)} \, d\mu_{(x)} \ .
$$
Proceeding symmetrically and recalling that $A \in \mathscr{A}(\mu, \nu)$, the two expressions coincide.
Thus, $A^c \in \mathscr{A}(\mu, \nu)$.
\end{proof}

A generalization of finite measures is the class of \emph{s-finite} measures. A measure is said to be s-finite if it can be expressed as a \emph{countable} sum of finite measures all defined on the same measurable space $(\Omega,\Sigma_\Omega)$.

\begin{theorem}\label{cor:s-finite_commute}
If $\mu$ and $\nu$ are two s-finite measures, then the pair $(\mu,\nu)$ is abelian.
\end{theorem}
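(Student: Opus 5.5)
We write $\mu=\sum_{i=1}^\infty \mu_i$ and $\nu=\sum_{j=1}^\infty \nu_j$, where every $\mu_i$ is a finite measure on $(X,\Sigma_X)$ and every $\nu_j$ is a finite measure on $(Y,\Sigma_Y)$. The plan is to reduce to Theorem~\ref{thm:finite_commute}. Each pair $(\mu_i,\nu_j)$ consists of two finite measures, so it is abelian. Hence for every $A\in\Sigma_X\otimes\Sigma_Y$ and every $i,j$, the maps $y\mapsto \mu_i(A^y)$ and $x\mapsto\nu_j(A_x)$ are measurable, and
\[
\int_Y \mu_{i}(A^y)\,d\nu_{j}(y)=\int_X \nu_{j}(A_x)\,d\mu_{i}(x).
\]

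First we establish measurability. By Lemma~\ref{lem:sezioni misurabili}, $A^y\in\Sigma_X$, and by the definition of the sum measure $\mu(A^y)=\sum_i\mu_i(A^y)$. This is a countable sum of nonnegative measurable functions of $y$, so it is $\Sigma_Y$-measurable. The same argument shows that $x\mapsto\nu(A_x)=\sum_j\nu_j(A_x)$ is $\Sigma_X$-measurable. This gives conditions (i) and (ii) of Definition~\ref{def:classe_A} for every $A$.

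Next we compute the integrals. The key auxiliary fact is that for a nonnegative measurable $g$ on $Y$,
\[
\int_Y g\,d\nu=\sum_j\int_Y g\,d\nu_j .
\]
This holds for indicator functions by the definition of $\nu$, extends to simple functions by linearity, and extends to general nonnegative $g$ by the Monotone Convergence Theorem applied along an increasing sequence of simple functions (on both sides, exchanging the limit with the sum of nonnegative terms). The same identity holds for $\mu=\sum_i\mu_i$. Then
\[
\int_Y\mu(A^y)\,d\nu(y)=\sum_j\int_Y\sum_i\mu_i(A^y)\,d\nu_j(y)=\sum_j\sum_i\int_Y\mu_i(A^y)\,d\nu_j(y),
\]
where the second equality uses the Monotone Convergence Theorem to exchange the series and the integral. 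Symmetrically,
\[
\int_X\nu(A_x)\,d\mu(x)=\sum_i\sum_j\int_X\nu_j(A_x)\,d\mu_i(x).
\]
The terms of the two double series agree by the abelianity of each pair $(\mu_i,\nu_j)$. Double series of nonnegative terms may be summed in either order (Tonelli for counting measure, or just monotonicity of partial sums). So the two iterated integrals coincide, condition (iii) holds, and $A\in\mathscr{A}(\mu,\nu)$ for every $A\in\Sigma_X\otimes\Sigma_Y$.

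The main obstacle is the auxiliary identity $\int g\,d\nu=\sum_j\int g\,d\nu_j$ for sums of measures. It is routine but must be justified via simple-function approximation and monotone convergence, with care about infinite values. Everything else is bookkeeping with nonnegative series.
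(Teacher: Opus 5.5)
Your proposal is correct and follows the paper's proof: write $\mu=\sum_i\mu_i$ and $\nu=\sum_j\nu_j$, apply Theorem~\ref{thm:finite_commute} to each finite pair $(\mu_i,\nu_j)$, get measurability from countable sums, and move the series in and out of the integrals by monotone convergence. You additionally make explicit two steps the paper uses without comment, namely the identity $\int g\,d\nu=\sum_j\int g\,d\nu_j$ and the reordering of the nonnegative double series.
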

\begin{proof}
By hypothesis, $\mu = \sum_{i=1}^\infty \mu_i$ and $\nu = \sum_{j=1}^\infty \nu_j$, where $\mu_i$ and $\nu_j$ are finite measures on $(X,\Sigma_X)$ and on $(Y,\Sigma_Y)$, respectively.
Let $A \in \Sigma_X \otimes \Sigma_Y$. We expand the first iterated integral making use of monotone convergence:
$$
\int_X \nu_{(A_x)} \, d\mu_{(x)} 
=
\int_X \left( \sum_{j=1}^\infty {\nu_j}_{(A_x)} \right) d \left(\sum_{i=1}^\infty \mu_i\right)\kern-5pt\,_{(x)}
=
\sum_{i=1}^\infty \sum_{j=1}^\infty \int_X {\nu_j}_{(A_x)} \, d{\mu_i}_{(x)} \ .
$$
Since $\mu_i$ and $\nu_j$ are finite measures, by Theorem \ref{thm:finite_commute} every pair $(\mu_i,\nu_j)$ is abelian; hence
$$ 
\int_X \nu_{(A_x)} \, d\mu_{(x)} 
=
\sum_{i=1}^\infty \sum_{j=1}^\infty \int_Y {\mu_i}_{(A^y)} \, d{\nu_j}_{(y)} \ .
$$
Passing the series under the integral sign (again by monotone convergence), we obtain the expression $\int_Y \mu_{(A^y)} \, d\nu_{(y)}$.
Both functions are countable sums of measurable functions, thus they themselves are measurable. It follows that $A \in \mathscr{A}(\mu, \nu)$.
\end{proof}

For $(\Sigma_X \otimes \Sigma_Y)$-measurable functions $f:X\times Y\to[0,+\infty]$, something similar happens to what we have already recalled for the elements $A\in\Sigma_X \otimes \Sigma_Y$ in Lemma~\ref{lem:sezioni misurabili}.

\noindent\textbf{Notations.} Let $f:X\times Y\to Z$. For every $x\in X$ and for every $y\in Y$, we define
\[
f^y(x)=f(x,y)=f_x(y)\ .
\]

\begin{proposition}
Let $(X,\Sigma_X)$ and $(Y,\Sigma_Y)$ be measure spaces, and let $f:X\times Y\to[0,+\infty]$ be a $(\Sigma_X\otimes\Sigma_Y)$-measurable function. Then
\begin{itemize}
	\item for every $y\in Y$ the function $f^y$ is $\Sigma_X$-measurable on $X$;
	\item for every $x\in X$ the function $f_x$ is $\Sigma_Y$-measurable on $Y$.
\end{itemize}
\end{proposition}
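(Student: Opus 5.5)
The plan is to reduce the statement directly to Lemma~\ref{lem:sezioni misurabili} by writing the superlevel sets of the sections as sections of superlevel sets. Fix $y\in Y$ and $t\in\mathbb{R}$. By the definition of measurability adopted in the paper, we must show that $\{x\in X\mid f^y(x)>t\}\in\Sigma_X$. The key observation is the set identity
\[
\{x\in X\mid f^y(x)>t\}=\{x\in X\mid f(x,y)>t\}=\bigl(\{(x',y')\in X\times Y\mid f(x',y')>t\}\bigr)^y ,
\]
which holds by unwinding the definitions of $f^y$ and of the section $A^y$.

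Set $A_t=\{(x',y')\mid f(x',y')>t\}$. Since $f$ is $(\Sigma_X\otimes\Sigma_Y)$-measurable, $A_t\in\Sigma_X\otimes\Sigma_Y$. Lemma~\ref{lem:sezioni misurabili} then gives $(A_t)^y\in\Sigma_X$. As $t$ was arbitrary, $f^y$ is $\Sigma_X$-measurable.

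The statement for $f_x$ is proved in the same way. For fixed $x\in X$ we use $\{y\in Y\mid f_x(y)>t\}=(A_t)_x$, and the second item of Lemma~\ref{lem:sezioni misurabili} shows that this set lies in $\Sigma_Y$.

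There is no real obstacle here. The only point to check is that the section operation commutes with taking the superlevel set, and this is immediate from the definitions. The nontrivial content, namely that sections of sets in $\Sigma_X\otimes\Sigma_Y$ are measurable, is already contained in Lemma~\ref{lem:sezioni misurabili}. If one preferred not to rely on that lemma, one would instead show that the family of sets whose $y$-sections all lie in $\Sigma_X$ contains the rectangles and is closed under complements and countable unions, and then conclude by the Remark characterizing $\Sigma_X\otimes\Sigma_Y$. This is not needed, since the lemma is already available.
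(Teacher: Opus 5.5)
Your proposal is correct and is essentially the paper's argument: the paper only cites Theorem~B of Section~34 in Halmos, and that theorem is proved in the same way. There, the preimage sets of a section are written as sections of the corresponding preimage sets of $f$, and Halmos's Theorem~A (the paper's Lemma~\ref{lem:sezioni misurabili}) is then applied, which is exactly your identity $\{x\mid f^y(x)>t\}=(\{f>t\})^y$.
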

\begin{proof}
See Theorem B of Section~34 in~\cite{Halmos1950}.
\end{proof}

\noindent
However, without particular assumptions on the measures $\mu$ and $\nu$, it is not guaranteed that the functions $y \mapsto \int_X f^y(x)\, d\mu_{(x)}$ and $x \mapsto \int_Y f_x(y)\, d\nu_{(y)}$ are still measurable.

\begin{definition}\label{def:classe_F}
Let $(X, \Sigma_X, \mu)$ and $(Y, \Sigma_Y, \nu)$ be measure spaces. We denote by $\mathcal{F}(\mu,\nu)$ the set of $(\Sigma_X \otimes \Sigma_Y)$-measurable functions $f:X\times Y\to[0,+\infty]$ such that 
\begin{enumerate}
	\item[(i)] the function $y \mapsto \int_X f^y(x)\, d\mu_{(x)}$ is $\Sigma_Y$-measurable on $Y$;
	\item[(ii)] the function $x \mapsto \int_Y f_x(y)\, d\nu_{(y)}$ is $\Sigma_X$-measurable on $X$;
	\item[(iii)] 
		$\displaystyle
\int_Y\left(\int_X f^y(x)\, d\mu_{(x)}\right)d\nu_{(y)}
	=
\int_X\left(\int_Y f_x(y)\, d\nu_{(y)}\right)d\mu_{(x)}
		$
\end{enumerate}
\end{definition}

\begin{lemma}\label{lem:monotonia}
Let $(\Omega,\Sigma_\Omega)$ be a measure space and let $\mathcal{K}$ be a set of functions $f:\Omega\to[0,+\infty]$ such that 
\begin{itemize}
	\item[(a)] for every $A\in\Sigma_\Omega$, $\chi_A \in\mathcal{K}$;
	\item[(b)] for every $f,g\in\mathcal{K}$, and $s,t\in[0,+\infty]$
	\[sf+tg\in\mathcal{K}\ ;\]
	\item[(c)] for every sequence $(f_n)$ of elements of $\mathcal{K}$, such that $f_1\le f_2\le \cdots$, we have that
	\[\sup_n f_n\in\mathcal{K}\ ;\]
\end{itemize}
then
\[
\big\{
f:\Omega\to[0,+\infty] \mid f \textrm{ is $\Sigma_\Omega$-measurable }
\big\}
\subseteq \mathcal{K}\ .
\]
\end{lemma}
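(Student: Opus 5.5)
The plan is the standard simple-function approximation argument. Let $f:\Omega\to[0,+\infty]$ be $\Sigma_\Omega$-measurable. First I would treat nonnegative simple functions, meaning finite combinations $s=\sum_{k=1}^m c_k\chi_{A_k}$ with $c_k\in[0,+\infty)$ and $A_k\in\Sigma_\Omega$. For these I would argue by induction on $m$. The case $m=1$ follows from (a) together with (b): take $f=g=\chi_{A_1}$, $s=c_1$ and $t=0$. Note that (b) allows the coefficients $0$ and $+\infty$, so this case needs no separate argument. For the inductive step, write $\sum_{k=1}^{m+1}c_k\chi_{A_k}=1\cdot\big(\sum_{k=1}^m c_k\chi_{A_k}\big)+c_{m+1}\chi_{A_{m+1}}$; both summands are in $\mathcal{K}$, so by (b) the sum is too. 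Hence every nonnegative simple function lies in $\mathcal{K}$.

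Next I would build, for a general measurable $f$, the usual increasing sequence of simple functions:
\[
s_n=\sum_{k=1}^{n2^n}\frac{k-1}{2^n}\,\chi_{\{(k-1)2^{-n}\le f<k2^{-n}\}}+n\,\chi_{\{f\ge n\}}\ .
\]
Each level set belongs to $\Sigma_\Omega$. Indeed, $\{f>t\}\in\Sigma_\Omega$ for every real $t$, so $\{f\ge t\}=\bigcap_j\{f>t-1/j\}\in\Sigma_\Omega$, and the sets $\{a\le f<b\}=\{f\ge a\}\setminus\{f\ge b\}$ are then differences of measurable sets. Therefore each $s_n$ is simple and lies in $\mathcal{K}$ by the first step. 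I would then verify that $s_n\le s_{n+1}$ pointwise, by checking that each dyadic interval at level $n$ splits into two intervals at level $n+1$ and that the cap $n$ increases. I would also verify that $\sup_n s_n=f$. Where $f(\omega)<\infty$ we have $0\le f(\omega)-s_n(\omega)<2^{-n}$ once $n>f(\omega)$. Where $f(\omega)=+\infty$ we have $s_n(\omega)=n\to\infty$. Property (c) then gives $f\in\mathcal{K}$.

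No step is a real obstacle. The only points needing care are that measurability is defined through the strict sets $\{u>t\}$, so the closed and half-open level sets must be derived as above, and that the value $+\infty$ of $f$ is handled by the cap term $n\chi_{\{f\ge n\}}$.
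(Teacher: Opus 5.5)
Your proof is correct and follows the same strategy as the paper: approximate $f$ by an increasing sequence of dyadic step functions built from indicators of measurable level sets, place these in $\mathcal{K}$ using (a) and (b), and conclude with (c). The paper's version uses $f_n=2^{-n}\sum_{k=1}^{\infty}\chi_{\{f>k2^{-n}\}}$, which needs only the sets $\{f>t\}$ (so no derived sets $\{f\ge t\}$ and no cap term), but it is an infinite sum and so also needs (c) to put each $f_n$ in $\mathcal{K}$. Your finite simple functions need (c) only once.
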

\begin{proof}
Let $f:\Omega\to[0,+\infty]$ be a $\Sigma_\Omega$-measurable function. For every integer $n\ge 1$, we define the function:
\[
f_n = \frac{1}{2^n} \sum_{k=1}^{\infty} \chi_{\left\{f>\frac{k}{2^n} \right\}}\ .
\]
By properties $(a)$, $(b)$ and $(c)$, $f_n \in \mathcal{K}$ for every $n$.
Thanks to property~$(c)$, we deduce that $f = \sup_n f_n \in\mathcal{K}$ (see, for example,~\cite{willem2013}).
\end{proof}

\begin{theorem}\label{thm:abelianità e funzioni} 
Let $(X, \Sigma_X, \mu)$ and $(Y, \Sigma_Y, \nu)$ be two measure spaces. Then, the following statements are equivalent:
\begin{itemize}
	\item[(a)] The pair $(\mu,\nu)$ is abelian;
	\item[(b)]
	$\mathcal{F}(\mu,\nu)=\big\{f:X\times Y\to[0,+\infty] \mid f \textit{ is } (\Sigma_X\otimes\Sigma_Y)\textit{-measurable}\big\}$.
\end{itemize}
\end{theorem}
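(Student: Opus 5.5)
The direction (b)$\Rightarrow$(a) is immediate. For every $A\in\Sigma_X\otimes\Sigma_Y$ the indicator $\chi_A$ is $(\Sigma_X\otimes\Sigma_Y)$-measurable, so $\chi_A\in\mathcal{F}(\mu,\nu)$. Since $(\chi_A)^y=\chi_{A^y}$ and $(\chi_A)_x=\chi_{A_x}$, we have $\int_X(\chi_A)^y\,d\mu=\mu_{(A^y)}$ and $\int_Y(\chi_A)_x\,d\nu=\nu_{(A_x)}$. Conditions (i)--(iii) of Definition~\ref{def:classe_F} for $\chi_A$ are then exactly conditions (i)--(iii) of Definition~\ref{def:classe_A} for $A$. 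Hence $A\in\mathscr{A}(\mu,\nu)$, and the pair is abelian.

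For (a)$\Rightarrow$(b), the inclusion $\mathcal{F}(\mu,\nu)\subseteq\{\text{measurable } f\}$ holds by definition. For the reverse inclusion I will apply Lemma~\ref{lem:monotonia} with $\Omega=X\times Y$, $\Sigma_\Omega=\Sigma_X\otimes\Sigma_Y$ and $\mathcal{K}=\mathcal{F}(\mu,\nu)$. I check its three hypotheses in turn.

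Hypothesis (a) of the lemma: by abelianity every $A\in\Sigma_X\otimes\Sigma_Y$ lies in $\mathscr{A}(\mu,\nu)$. The translation above then gives $\chi_A\in\mathcal{F}(\mu,\nu)$.

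Hypothesis (b): take $f,g\in\mathcal{F}$ and $s,t\in[0,+\infty]$. Linearity of the integral for nonnegative functions, with the convention $0\cdot\infty=0$, gives
\[
\int_X (sf+tg)^y\,d\mu = s\int_X f^y\,d\mu + t\int_X g^y\,d\mu .
\]
The right-hand side is a nonnegative combination of $\Sigma_Y$-measurable $[0,+\infty]$-valued functions, so it is measurable. The symmetric statement holds in $x$. Integrating once more and using linearity again, the two iterated integrals of $sf+tg$ are equal. One small point to note is that linearity with coefficient $+\infty$ is still valid for nonnegative integrals: $\int\infty\cdot h=\infty\cdot\int h$, since $\infty\cdot h$ is $\infty$ on $\{h>0\}$ and $0$ elsewhere.

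Hypothesis (c): take $f_n\uparrow f$ with $f_n\in\mathcal{F}$. Then $f$ is $(\Sigma_X\otimes\Sigma_Y)$-measurable as a pointwise supremum. For each fixed $y$, $f_n^y\uparrow f^y$, so the Monotone Convergence Theorem gives
\[
\int_X f^y\,d\mu=\sup_n\int_X f_n^y\,d\mu .
\]
This is an increasing supremum of measurable functions, hence measurable; the same holds in $x$. A second application of monotone convergence shows that both iterated integrals of $f$ are the suprema of the corresponding iterated integrals of $f_n$, which coincide for each $n$, so they coincide for $f$.

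Lemma~\ref{lem:monotonia} then yields every measurable $f\ge0$ in $\mathcal{F}(\mu,\nu)$. There is no real obstacle. The only delicate step is the bookkeeping in (b) and (c), namely keeping track of measurability of the inner integrals as $[0,+\infty]$-valued functions.
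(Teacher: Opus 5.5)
Your proof is correct and follows the same route as the paper. For (b)$\Rightarrow$(a) you use the correspondence $A\in\mathscr{A}(\mu,\nu)\Leftrightarrow\chi_A\in\mathcal{F}(\mu,\nu)$. For (a)$\Rightarrow$(b) you apply Lemma~\ref{lem:monotonia} with $\mathcal{K}=\mathcal{F}(\mu,\nu)$, checking indicators, nonnegative combinations, and increasing limits via the Monotone Convergence Theorem; your explicit treatment of coefficients equal to $+\infty$ is a small detail the paper leaves implicit.
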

\begin{proof}[Proof of (b) $\Rightarrow$ (a)]
Let $A \in \Sigma_X \otimes \Sigma_Y$. If hypothesis~(b) is satisfied, we deduce that~$\chi_A \in \mathcal{F}(\mu,\nu)$.
Comparing Definition~\ref{def:classe_A} with Definition~\ref{def:classe_F}, we have that
\[
A\in \mathscr{A}(\mu,\nu)
\ \Leftrightarrow
\chi_A \in \mathcal{F}(\mu,\nu)\ .
\]
This implies that~$\mathscr{A}(\mu,\nu) = \Sigma_X \otimes \Sigma_Y$. That is, the pair $(\mu,\nu)$ is abelian.
\end{proof}

\begin{proof}[Proof of (a) $\Rightarrow$ (b)]
Let us consider Lemma~\ref{lem:monotonia} with $\Omega=X\times Y$, $\Sigma_\Omega=\Sigma_X\otimes\Sigma_Y$ and~$\mathcal{K}=\mathcal{F}(\mu,\nu)$.
By hypothesis $(a)$, we have that for every $A \in \Sigma_X \otimes \Sigma_Y$, $\chi_A \in \mathcal{F}(\mu,\nu)$.
By the linearity of integrals, hypothesis $(b)$ of Lemma~\ref{lem:monotonia} is satisfied.
Let us now consider a sequence $(f_n)$ of elements of $\mathcal{F}(\mu,\nu)$ such that $f_1\le f_2\le \cdots$ and define $\displaystyle f=\sup_n f_n=\lim_n f_n$.
The Monotone Convergence Theorem implies that, for every $y\in Y$, 
\[
\int_X
f^y(x)\ d\mu_{(x)}
=
\int_X
\left[\sup_n f_n(x)\right]^y\ d\mu_{(x)}
=
\sup_n\int_X
\left[f_n(x)\right]^y\ d\mu_{(x)}
\]
From this it follows that the function $\displaystyle y\mapsto \int_X f^y(x)\ d\mu_{(x)}$ is $\Sigma_Y$-measurable on $Y$.
Similarly, the function $\displaystyle x\mapsto \int_X f_x(y)\ d\nu_{(y)}$ is $\Sigma_X$-measurable on $X$.
Finally, using the Monotone Convergence Theorem again, we have that
\begin{align*}
\int_Y\left(\int_X f^y\, d\mu_{(x)}\right)d\nu_{(y)} 
&= \int_Y\left[\sup_n\left(\int_X f_n^y(x)\, d\mu_{(x)}\right)\right]d\nu_{(y)} \\
&= \sup_n\int_Y\left(\int_X f_n^y(x)\, d\mu_{(x)}\right)d\nu_{(y)} \\
&= \sup_{n} \int_X\left(\int_Y (f_n)_x(y)\, d\nu_{(y)}\right)d\mu_{(x)} \\
&= \int_X\left(\int_Y f_x(y)\, d\nu_{(y)}\right)d\mu_{(x)} \ .
\end{align*}
This proves that $\displaystyle f=\sup_n f_n \in \mathcal{F}(\mu,\nu)$. Thanks to Lemma~\ref{lem:monotonia}, we can conclude that\\ 
$\mathcal{F}(\mu,\nu)=\big\{f:X\times Y\to[0,+\infty] \mid f \textit{ is } (\Sigma_X\otimes\Sigma_Y)\textit{-measurable}\big\}$.
\end{proof}

\section{The product measure}\label{sec:misura prodotto}

From Theorem~\ref{thm:abelianità e funzioni} it follows that, if Tonelli's Theorem holds, the pair of measures involved is abelian. To prove Tonelli's Theorem in this optimal framework, we define the product measure of an abelian pair of measures.

\begin{definition}\label{def:misura prodotto}
Let $(\mu,\nu)$ be an abelian pair of measures. The \emph{product measure} of $A \in \Sigma_X \otimes \Sigma_Y$ is defined by
\begin{equation}\label{eq:misura prodotto}
(\mu \otimes \nu)_{(A)} 
=
\int_X \nu_{(A_x)} \, d\mu_{(x)}
=
\int_Y \mu_{(A^y)} \, d\nu_{(y)}
\end{equation}
\end{definition}

\begin{proposition}\label{prop:buona_positura_prodotto}
Let $(X,\Sigma_X,\mu)$ and $(Y,\Sigma_Y,\nu)$ be measure spaces such that the pair $(\mu,\nu)$ is abelian. Then the function $\mu \otimes \nu: \Sigma_X \otimes \Sigma_Y \to [0, +\infty]$ defined by~(\ref{eq:misura prodotto}) is a measure on the measurable space $(X \times Y, \Sigma_X \otimes \Sigma_Y)$.
\end{proposition}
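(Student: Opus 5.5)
The plan is to check the two axioms of a measure directly from Definition~\ref{def:misura prodotto}. Everything needed is already contained in Lemma~\ref{lem:proprietà A}$(\beta)$ together with the abelian hypothesis $\mathscr{A}(\mu,\nu)=\Sigma_X\otimes\Sigma_Y$.

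First I will settle well-posedness. Since $(\mu,\nu)$ is abelian, every $A\in\Sigma_X\otimes\Sigma_Y$ belongs to $\mathscr{A}(\mu,\nu)$. Conditions (i) and (ii) of Definition~\ref{def:classe_A} guarantee that both integrands in~(\ref{eq:misura prodotto}) are measurable, so both integrals are defined in $[0,+\infty]$. Condition (iii) guarantees that they coincide. Hence $(\mu\otimes\nu)_{(A)}$ is a well-defined element of $[0,+\infty]$.

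Next I will check that $(\mu\otimes\nu)_{(\emptyset)}=0$. Every section of $\emptyset$ is empty, so $\nu_{(\emptyset_x)}=0$ for all $x$, and the integral vanishes. Alternatively, this follows from the rectangle computation in Lemma~\ref{lem:proprietà A}$(\alpha)$ with $E=F=\emptyset$.

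Then I will prove $\sigma$-additivity. Let $(A_n)$ be a sequence of pairwise disjoint sets in $\Sigma_X\otimes\Sigma_Y$, and put $A=\bigsqcup_n A_n$. By abelianity each $A_n$ lies in $\mathscr{A}(\mu,\nu)$, so Lemma~\ref{lem:proprietà A}$(\beta)$ applies and yields
\[
\int_X \nu_{(A_x)}\,d\mu_{(x)}=\sum_{n=1}^\infty \int_X \nu_{\big((A_n)_x\big)}\,d\mu_{(x)} ,
\]
which is exactly $(\mu\otimes\nu)_{(A)}=\sum_n(\mu\otimes\nu)_{(A_n)}$. The underlying argument uses $\sigma$-additivity of $\nu$ on the disjoint sections $(A_n)_x$ and then monotone convergence, and it has already been carried out in that lemma.

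There is no real obstacle here. The only subtle point is well-posedness, namely that the two iterated integrals exist and agree for every set. This is precisely what the abelian hypothesis supplies, and it is the reason no $\sigma$-finiteness is required.
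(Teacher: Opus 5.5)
Your proposal is correct and follows essentially the same route as the paper, which also obtains $\sigma$-additivity directly from Lemma~\ref{lem:proprietà A}$(\beta)$ using that abelianity places every set in $\mathscr{A}(\mu,\nu)$. You additionally make explicit the well-posedness of~(\ref{eq:misura prodotto}) and the check $(\mu\otimes\nu)_{(\emptyset)}=0$, both of which the paper leaves implicit.
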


\begin{proof}
It is sufficient to prove that $\mu\otimes\nu$ is $\sigma$-additive.
Let $\{A_n\}_{n=1}^\infty \subset \Sigma_X \otimes \Sigma_Y$ be a sequence of disjoint measurable sets. From Lemma~\ref{lem:proprietà A} it follows that the set $\displaystyle A=\bigsqcup_{n=1}^\infty A_n$ satisfies
\begin{align*}
(\mu \otimes \nu)_{(A)} 
& =
\int_Y \mu_{(A^y)}\ d\nu_{(y)}\\
& =
\sum_{n=1}^\infty
\int_Y \mu_{\big((A_n)^y\big)}\ d\nu_{(y)}\\
& =
\sum_{n=1}^\infty
(\mu \otimes \nu)_{(A_n)} \ .
\end{align*}

\end{proof}

\noindent
We recall that the outer measure of a set $A\in X\times Y$ is defined as follows
\[
(\mu\times\nu)^*_{(A)}
=
\inf
\left\{
\sum_{i=1}^\infty
\mu_{(B_n)}\nu_{(C_n)} 
\mid
(B_n)\subset\Sigma_X \ ,  \ (C_n)\subset\Sigma_Y \ ,\
A\subseteq\bigcup_{n=1}^\infty B_n\times C_n
\right\}\ ,
\]
with the convention that $0\cdot \infty=\infty\cdot 0=0$.

\begin{proposition}
Assume tha $(\mu,\nu)$ is an abelian pair of measures.
Let $A\in\Sigma_X \otimes\Sigma_Y$. Then
\[
(\mu\otimes\nu)_{(A)}
\le
(\mu\times\nu)^*_{(A)}\ .
\]
\end{proposition}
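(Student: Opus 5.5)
The plan is to compare $(\mu\otimes\nu)_{(A)}$ with the cost $\sum_n \mu_{(B_n)}\nu_{(C_n)}$ of an arbitrary covering of $A$ by measurable rectangles, and then take the infimum. Fix sequences $(B_n)\subset\Sigma_X$ and $(C_n)\subset\Sigma_Y$ with $A\subseteq\bigcup_{n=1}^\infty B_n\times C_n$. If no such covering exists the infimum is $+\infty$ and there is nothing to prove, although in fact $X\times Y$ is itself a rectangle, so coverings always exist. It then suffices to show
\[
(\mu\otimes\nu)_{(A)}\le\sum_{n=1}^\infty \mu_{(B_n)}\nu_{(C_n)}\ .
\]

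\textbf{Step 1.} Since $(\mu,\nu)$ is abelian, Proposition~\ref{prop:buona_positura_prodotto} shows that $\mu\otimes\nu$ is a measure on $\Sigma_X\otimes\Sigma_Y$. Each rectangle $B_n\times C_n$ lies in $\Sigma_X\otimes\Sigma_Y$, so $\bigcup_n B_n\times C_n$ is measurable. Monotonicity and countable subadditivity of the measure $\mu\otimes\nu$ then give
\[
(\mu\otimes\nu)_{(A)}\le(\mu\otimes\nu)_{\left(\bigcup_n B_n\times C_n\right)}\le\sum_{n=1}^\infty(\mu\otimes\nu)_{(B_n\times C_n)}\ .
\]
Subadditivity follows in the standard way: disjointify the sets as $D_n=(B_n\times C_n)\setminus\bigcup_{k<n}(B_k\times C_k)$, apply $\sigma$-additivity, and use monotonicity.

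\textbf{Step 2.} Compute the measure of a rectangle. By the proof of Lemma~\ref{lem:proprietà A}($\alpha$), $\nu_{((B_n\times C_n)_x)}=\nu_{(C_n)}{\chi_{B_n}}_{(x)}$, so
\[
(\mu\otimes\nu)_{(B_n\times C_n)}=\int_X \nu_{(C_n)}{\chi_{B_n}}_{(x)}\,d\mu_{(x)}=\mu_{(B_n)}\nu_{(C_n)}\ .
\]
This holds with the convention $0\cdot\infty=0$, which is exactly the convention of Lebesgue integration. If $\nu_{(C_n)}=\infty$ and $\mu_{(B_n)}=0$, the integrand is $\infty$ on a $\mu$-null set, so the integral is $0$. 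If $\nu_{(C_n)}=0$, the integrand vanishes identically. This matches the convention adopted in the definition of $(\mu\times\nu)^*$.

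\textbf{Step 3.} Combining Steps 1 and 2 gives the inequality for every admissible covering. Taking the infimum over all coverings yields $(\mu\otimes\nu)_{(A)}\le(\mu\times\nu)^*_{(A)}$.

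The only point that needs care is the consistency of the $0\cdot\infty$ convention in Step 2. Abelianity itself enters only through Proposition~\ref{prop:buona_positura_prodotto}, which guarantees that $\mu\otimes\nu$ is a genuine measure and hence monotone and countably subadditive.
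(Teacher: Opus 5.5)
Your proof is correct and follows essentially the paper's route. The paper gets subadditivity by integrating $\chi_A\le\sum_n\chi_{B_n\times C_n}$ against $\mu\otimes\nu$ and applying the Monotone Convergence Theorem, whereas you use monotonicity and countable subadditivity of the measure $\mu\otimes\nu$ directly; both then conclude with $(\mu\otimes\nu)_{(B_n\times C_n)}=\mu_{(B_n)}\nu_{(C_n)}$, and your explicit check of the $0\cdot\infty$ convention fills in a detail the paper leaves implicit.
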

\begin{proof}
Let $(B_n)\subset\Sigma_X$ and $(C_n)\subset\Sigma_Y$ be such that 
$A\subseteq\bigcup_{n=1}^\infty B_n\times C_n$.
Since
\[
\chi_A
\le
\sum_{n=1}^\infty
\chi_{(B_n\times C_n)}\ ,
\]
the Monotone Convergence Theorem implies that
\begin{align*}
(\mu\otimes\nu)_{(A)}
& =
\int_{X\times Y}\chi_A\ d(\mu\otimes\nu)\\
& \le 
\int_{X\times Y}\sum_{n=1}^\infty
\chi_{(B_n\times C_n)}\ d(\mu\otimes\nu)\\
& =
\sum_{n=1}^\infty\int_{X\times Y}
\mu_{(B_n)}\nu_{(C_n)}\ .
\end{align*}
\end{proof}

\begin{example}\label{exe:di Falkner}
An example of the strict inequality is found in~\cite{falkner2019}, an example that N.~Falkner attributes to P.~J.~Fitzsimmons.
Consider the space $X=Y=[0,1]$ and we define on every element $A\in\mathcal{B}\big([0,1]\big)$, the $\sigma$-algebra of Borel sets on $[0,1]$, the s-finite measure
\begin{center}
$
\displaystyle
\mu_{(A)}
=
\nu_{(A)}
=
\sum_{n=1}^\infty
\lambda_{(A)}
$,
\end{center}
where $\lambda$ is the Lebesgue measure.
It is clear that $\mu_{(A)}=0$ if $\lambda_{(A)}=0$, and that $\mu_{(A)}=\infty$ if $\lambda_{(A)}>0$.
Let us calculate the product measure $\mu\otimes\nu$ of the diagonal $\Delta = \big\{(x,x) \mid x \in [0,1]\big\}$.
Since $\mu_{(\Delta^y)}
=0
=\nu_{(\Delta_x)}$,
we have that $(\mu\otimes\nu)_{(\Delta)}=0$.
Therefore,
\begin{center}
$\displaystyle
{(\mu\otimes\nu)}_{(\Delta)}=0<\infty={(\mu\times\nu)^*}_{\kern-4pt(\Delta)}
$,
\end{center}
%and the measure $\mu\otimes\nu$ cannot be defined using Carathéodory's construction.
and the measure $\mu\otimes\nu$ is not equal to the measure defined by Carathéodory’s extension
Theorem.

 %See also the approach to product measures in~\cite{royden1988} and~\cite{tao2011measure} independent from Carathéodory outer measure.
\end{example}

\begin{theorem}[Tonelli's Theorem]\label{thm:Tonelli}
Let $(X,\Sigma_X,\mu)$ and $(Y,\Sigma_Y,\nu)$ be measure spaces such that the pair~$(\mu,\nu)$ is abelian.
If the function $f: X \times Y \to [0, +\infty]$ is $(\Sigma_X \otimes \Sigma_Y)$-measurable, then
$$ 
\int_{X\times Y} f\ d(\mu\otimes\nu)
=
\int_X \left( \int_Y f(x,y) \, d\nu(y) \right) d\mu(x) 
=
\int_Y \left( \int_X f(x,y) \, d\mu(x) \right) d\nu(y) \ .
$$
\end{theorem}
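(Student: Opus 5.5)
The second equality in the statement is already available: since $(\mu,\nu)$ is abelian, Theorem~\ref{thm:abelianità e funzioni} gives $f\in\mathcal{F}(\mu,\nu)$. Hence both inner integrals are measurable and the two iterated integrals coincide. What remains is the first equality. The plan is to identify $\int_{X\times Y} f\,d(\mu\otimes\nu)$ with the iterated integral $\int_X\big(\int_Y f_x\,d\nu\big)d\mu$, using Proposition~\ref{prop:buona_positura_prodotto} to know that $\mu\otimes\nu$ is a genuine measure, so that its integral is defined.

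I would apply Lemma~\ref{lem:monotonia} with $\Omega=X\times Y$, $\Sigma_\Omega=\Sigma_X\otimes\Sigma_Y$, and take $\mathcal{K}$ to be the class of measurable $f:X\times Y\to[0,+\infty]$ such that
\[
\int_{X\times Y} f\,d(\mu\otimes\nu)=\int_X\left(\int_Y f_x(y)\,d\nu_{(y)}\right)d\mu_{(x)} .
\]
The right-hand side makes sense for every such $f$ by the previous paragraph. I would then check the three hypotheses of the lemma in turn.

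\textbf{(a)} For $A\in\Sigma_X\otimes\Sigma_Y$, the left side equals $(\mu\otimes\nu)_{(A)}$ and the right side equals $\int_X\nu_{(A_x)}\,d\mu_{(x)}$. These agree by Definition~\ref{def:misura prodotto}.

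\textbf{(b)} For $s,t\in[0,+\infty]$ and $f,g\in\mathcal{K}$, both sides are additive and positively homogeneous. On the right this holds first for the inner integral at each fixed $x$ and then for the outer integral. The convention $0\cdot\infty=0$ makes this consistent even when $s$ or $t$ is infinite.

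\textbf{(c)} This is the step needing the most care, though it is routine. For $f_n\uparrow f$, the left side passes to the supremum by monotone convergence for $\mu\otimes\nu$. On the right, monotone convergence applies at each fixed $x$, giving $\int_Y (f_n)_x\,d\nu\uparrow\int_Y f_x\,d\nu$, and then once more for $\mu$. The point to verify is that the intermediate functions $x\mapsto\int_Y (f_n)_x\,d\nu$ are measurable and increasing in $n$, so that monotone convergence is legitimate. Measurability follows from Theorem~\ref{thm:abelianità e funzioni}, exactly as in its proof, and monotonicity is clear since $f_n\le f_{n+1}$.

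Lemma~\ref{lem:monotonia} then yields that every measurable $f\ge0$ belongs to $\mathcal{K}$, which is the first equality. The second equality was already obtained from Theorem~\ref{thm:abelianità e funzioni}, so the statement follows. I do not expect a real obstacle: all the work was done in Theorem~\ref{thm:abelianità e funzioni} and Proposition~\ref{prop:buona_positura_prodotto}.
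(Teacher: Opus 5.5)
Your proposal is correct and follows the paper's approach: the paper's proof only says to apply Lemma~\ref{lem:monotonia} as in the proof of Theorem~\ref{thm:abelianità e funzioni}, which is what you do with the class $\mathcal{K}$ of functions whose $\mu\otimes\nu$-integral equals the iterated integral. You also spell out what the paper leaves implicit: Proposition~\ref{prop:buona_positura_prodotto} makes monotone convergence available on the left side, and Theorem~\ref{thm:abelianità e funzioni} gives the measurability of the inner integrals and the second equality.
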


\begin{proof}
The proof, using Lemma~\ref{lem:monotonia}, is similar to the proof of Theorem~\ref{thm:abelianità e funzioni}.
\end{proof}

Fubini's Theorem, which extends the analysis to summable functions of mixed sign, is obtained with the usual procedures of decomposing the function into a `positive part' and a `negative part'.

\begin{theorem}[Fubini's Theorem]
Let $(X,\Sigma_X,\mu)$ and $(Y,\Sigma_Y,\nu)$ be measure spaces such that the pair~$(\mu,\nu)$ is abelian.
If $u\in L^1(X \times Y, d(\mu\otimes\nu))$, then the sections of $u$ are almost everywhere integrable and the equality holds:
$$ 
\int_{X\times Y} u\ d(\mu\otimes\nu)
=
\int_X \left( \int_Y u(x,y) \, d\nu(y) \right) d\mu(x) 
=
\int_Y \left( \int_X u(x,y) \, d\mu(x) \right) d\nu(y) \ .
$$
\end{theorem}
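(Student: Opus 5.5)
We would obtain Fubini's Theorem from Tonelli's Theorem (Theorem~\ref{thm:Tonelli}) applied to the positive part $u^+=\max(u,0)$, the negative part $u^-=\max(-u,0)$ and $|u|=u^++u^-$. All three are nonnegative and $(\Sigma_X\otimes\Sigma_Y)$-measurable, since $u$ is. By Theorem~\ref{thm:abelianità e funzioni} they belong to $\mathcal{F}(\mu,\nu)$. Hence the functions
\[
x\mapsto \int_Y u^\pm(x,y)\,d\nu(y),\qquad y\mapsto \int_X u^\pm(x,y)\,d\mu(x)
\]
are measurable, and by Theorem~\ref{thm:Tonelli} their integrals equal $\int_{X\times Y}u^\pm\,d(\mu\otimes\nu)$.

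The first step is the almost-everywhere integrability of the sections. Tonelli applied to $|u|$ gives
\[
\int_X\left(\int_Y |u(x,y)|\,d\nu(y)\right)d\mu(x)=\int_{X\times Y}|u|\,d(\mu\otimes\nu)<\infty .
\]
A nonnegative measurable function with finite integral is finite $\mu$-a.e. Therefore the set
\[
N=\Big\{x\in X \mid \int_Y |u(x,y)|\,d\nu(y)=\infty\Big\}
\]
belongs to $\Sigma_X$ and satisfies $\mu_{(N)}=0$. For every $x\notin N$, the section $u_x$ (measurable by the Proposition of Section~\ref{sect:classe commutativa} on sections) is $\nu$-integrable. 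The symmetric argument, using the other iterated integral, gives a $\nu$-null set $M\in\Sigma_Y$ off which $u^y$ is $\mu$-integrable.

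The second step is the identity. For $x\notin N$ we set
\[
\int_Y u(x,y)\,d\nu(y)=\int_Y u^+(x,y)\,d\nu(y)-\int_Y u^-(x,y)\,d\nu(y),
\]
and we give it the value $0$ on $N$. The resulting function is $\Sigma_X$-measurable, being a difference of two measurable, $\mu$-a.e.\ finite functions modified on a measurable null set. Each of the two pieces has a finite integral, bounded by that of $|u|$. By linearity of the integral for integrable functions, and since changes on the null set $N$ do not affect integrals, we obtain
\[
\int_X\left(\int_Y u\,d\nu\right)d\mu=\int_{X\times Y}u^+\,d(\mu\otimes\nu)-\int_{X\times Y}u^-\,d(\mu\otimes\nu)=\int_{X\times Y}u\,d(\mu\otimes\nu).
\]
The same computation in the other order gives the third expression.

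The only delicate point, and the expected main obstacle, is the convention for the inner integral on the null set. One must check that $N$ is genuinely measurable, which holds because $x\mapsto\int_Y|u_x|\,d\nu$ is measurable, so that the iterated integral is well defined independently of that convention. Abelianity is used only through Theorems~\ref{thm:abelianità e funzioni} and~\ref{thm:Tonelli}; no $\sigma$-finiteness is needed anywhere.
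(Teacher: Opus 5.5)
Your proposal is correct and follows the route the paper indicates: split $u=u^+-u^-$, apply Tonelli's Theorem to $u^\pm$ and to $|u|$, and subtract. You also supply the details the paper leaves implicit, in particular that the exceptional set $N$ is measurable and $\mu$-null without any $\sigma$-finiteness, because abelianity makes $x\mapsto\int_Y|u_x|\,d\nu$ measurable with finite integral.
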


\section{Semi-finiteness}\label{sect:semifinitezza}

\begin{definition}
The measure space $(\Omega,\Sigma_\Omega,\mu)$ (in short, $\mu$) is semi-finite if, for every $A\in\Sigma_\Omega$ such that $\mu_{(A)}>0$, there exists a $B\subseteq A$ such that $B\in\Sigma_\Omega$ and $0<\mu_{(B)}<\infty$.
\end{definition}

\begin{theorem}
Let $(\mu,\nu)$ be an abelian pair of measures such that $\mu_{(X)}>0$ and $\nu_{(Y)}>0$.
Then, the following statements are equivalent:
\begin{itemize}
	\item[(a)] $(\mu\otimes\nu)$ is semi-finite;
	\item[(b)] $\mu$ and $\nu$ are semi-finite.
\end{itemize}
\end{theorem}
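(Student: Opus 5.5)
I will prove the two implications separately. Both rest on the identity of Definition~\ref{def:misura prodotto} and on the positivity of an integral of a nonnegative function.

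\textbf{(b)$\Rightarrow$(a).} Let $A\in\Sigma_X\otimes\Sigma_Y$ with $(\mu\otimes\nu)_{(A)}>0$. Since
\[
(\mu\otimes\nu)_{(A)}=\int_X \nu_{(A_x)}\,d\mu_{(x)}>0 ,
\]
the measurable function $x\mapsto\nu_{(A_x)}$ is not $\mu$-a.e.\ zero. Hence there is $k\ge1$ with $E=\{x\mid \nu_{(A_x)}>1/k\}\in\Sigma_X$ and $\mu_{(E)}>0$. By semi-finiteness of $\mu$, choose $E'\subseteq E$ in $\Sigma_X$ with $0<\mu_{(E')}<\infty$.

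The difficulty is to get finiteness in the $y$-direction uniformly in $x$. My plan is to work with $A\cap(E'\times Y)$, whose product measure lies between $\mu_{(E')}/k$ and $\infty$, and to truncate in $y$.

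Let $\mathcal{G}=\{F\in\Sigma_Y\mid \nu_{(F)}<\infty\}$. For each $x\in E'$, semi-finiteness of $\nu$ gives
\[
\nu_{(A_x)}=\sup\{\nu_{(A_x\cap F)}\mid F\in\mathcal G\}.
\]
This is the standard fact that in a semi-finite space every set is the supremum of its finite-measure subsets; I would prove it briefly by the usual exhaustion argument. It gives, for every $x\in E'$, a set $F\in\mathcal G$ with $\nu_{(A_x\cap F)}>0$, but the set $F$ depends on $x$.

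To get a single $F$, I would use the supremum
\[
s=\sup_{F\in\mathcal G}(\mu\otimes\nu)_{(A\cap(E'\times F))},
\]
which is approximated by an increasing sequence $F_n\in\mathcal G$. Set $B_n=A\cap(E'\times F_n)$. Then
\[
(\mu\otimes\nu)_{(B_n)}\le\mu_{(E')}\,\nu_{(F_n)}<\infty .
\]
So it suffices to show that $(\mu\otimes\nu)_{(B_n)}>0$ for some $n$. Suppose instead that all of these vanish. Then $\nu_{(A_x\cap F)}=0$ for $\mu$-a.e.\ $x\in E'$, for every $F\in\mathcal G$. I expect this to be the main obstacle: the exceptional null set depends on $F$, and $\mathcal G$ is uncountable.

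As a fallback I would swap roles. Use $\int_Y\mu_{(A^y\cap E')}\,d\nu_{(y)}>0$, which holds by abelianity. This yields $G\in\Sigma_Y$ with $\nu_{(G)}>0$ on which $\mu_{(A^y\cap E')}>1/m$, and then $G'\subseteq G$ with $0<\nu_{(G')}<\infty$. The set $B=A\cap(E'\times G')$ then satisfies
\[
(\mu\otimes\nu)_{(B)}\le\mu_{(E')}\,\nu_{(G')}<\infty .
\]
It also satisfies, since $B^y=A^y\cap E'$ for $y\in G'$,
\[
(\mu\otimes\nu)_{(B)}=\int_{G'}\mu_{(A^y\cap E')}\,d\nu_{(y)}\ge\nu_{(G')}/m>0 .
\]
This alternating use of the two iterated integrals, made possible by abelianity, resolves the obstacle, and it is the route I would actually write.

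\textbf{(a)$\Rightarrow$(b).} By symmetry it suffices to treat $\mu$. Let $E\in\Sigma_X$ with $\mu_{(E)}>0$. Since $\nu_{(Y)}>0$, we have
\[
(\mu\otimes\nu)_{(E\times Y)}=\mu_{(E)}\,\nu_{(Y)}>0 .
\]
By (a) there is $B\subseteq E\times Y$ with $0<(\mu\otimes\nu)_{(B)}<\infty$. The sections satisfy $B_x=\emptyset$ for $x\notin E$, so
\[
0<\int_E\nu_{(B_x)}\,d\mu_{(x)}<\infty .
\]
The function $h(x)=\nu_{(B_x)}$ is therefore integrable on $E$ and not a.e.\ zero. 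Hence for some $k$ the set $E'=\{x\in E\mid h(x)>1/k\}$ has positive measure, and by Chebyshev's inequality
\[
\mu_{(E')}\le k\int_E h\,d\mu<\infty .
\]
So $E'\subseteq E$ with $0<\mu_{(E')}<\infty$, which proves that $\mu$ is semi-finite. The argument for $\nu$ is identical, using the other iterated integral.
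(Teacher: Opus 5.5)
Your proof is correct, and the route you say you would actually write for (b)$\Rightarrow$(a) is the paper's own: restrict in $x$ to a set $E'$ of finite positive $\mu$-measure, pass to the $y$-integral by abelianity, restrict in $y$ to a set $G'$ of finite positive $\nu$-measure, and bound by $\mu_{(E')}\nu_{(G')}$. The $\mathcal{G}$-supremum detour and the thresholds $1/k$, $1/m$ are unnecessary and can be dropped. For (a)$\Rightarrow$(b) the paper instead integrates over $Y$ and takes a single section $B^y\subseteq C$ with $0<\mu_{(B^y)}<\infty$ as the witness. Your Chebyshev argument on a superlevel set of $x\mapsto\nu_{(B_x)}$ is an equally valid and slightly more explicit variant.
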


\begin{proof}[Proof of $(b)\Rightarrow (a)$]
Let $A\in\Sigma_X\times\Sigma_Y$ such that
\[
(\mu\otimes\nu)_{(A)}
=
\int_X \nu_{(A_x)} \, d\mu_{(x)} >0\ .
\]
We define $\displaystyle D=\{x\in X \mid \nu_{(A_x)} >0\}$.
%Since $A \in \Sigma_X \otimes \Sigma_Y$ and the pair $(\mu,\nu)$ is abelian, we have $A \in \mathscr{A}(\mu,\nu)$.
%Consequently, by condition (ii) of Definition~\ref{def:classe_A}, the function $x \mapsto \nu_{(A_x)}$ is $\Sigma_X$-measurable.
%Thus, since $D$ is the preimage of the Borel interval $(0, +\infty]$ through a measurable function, it is in $\Sigma_X$.
Since $\mu_{(D)}>0$, the semi-finiteness of $\mu$ implies the existence of $E\subseteq D$ such that $E\in\Sigma_X$ and $0<\mu_{(E)}<\infty$.
The set $B=A\cap (E\times Y)$ satisfies
\begin{align*}
0
& < \int_E \nu_{(A_x)}\ d\mu_{(x)}\\
& = \int_X \nu_{(B_x)}\ d\mu_{(x)}\\
& = \int_Y \mu_{(B^y)}\ d\nu_{(y)}\ .
\end{align*}
We define $\displaystyle F=\{y\in Y \mid \mu_{(B^y)} >0\}$.
Since $\int_Y \mu_{(B^y)}\, d\nu_{(y)} > 0$, we have that~$\nu_{(F)}>0$; the semi-finiteness of $\nu$ implies the existence of $G\subseteq F$ such that $G\in\Sigma_Y$ and $0<\nu_{(G)}<\infty$.
The set $C=B\cap (X\times G)$ satisfies
\begin{align*}
0
& < \int_G \mu_{(B^y)}\ d\nu_{(y)}\\
& = \int_Y \mu_{(C^y)}\ d\nu_{(y)}\\
& = (\mu\otimes\nu)_{(C)}\ .
\end{align*}
Since 
$
(\mu\otimes\nu)_{(C)}
\le \mu_{(E)}\cdot\nu_{(G)}<\infty
$,
the proof is complete.
\end{proof}

\begin{proof}[Proof of $(a)\Rightarrow (b)$]
Let $C\in\Sigma_X$ such that $\mu_{(C)}>0$. We define $A=C\times Y$.
In this way, 
\begin{align*}
& A\in \Sigma_X\otimes\Sigma_Y \textrm{ and }\\
& (\mu\otimes\nu)_{(A)}=\mu_{(C)}\cdot \nu_{(Y)}>0\ .
\end{align*}
The semi-finiteness of $\mu\otimes\nu$ implies the existence of $B\subseteq A$  such that
\begin{center}
$
0 < (\mu\otimes\nu)_{(B)} = \int_Y \mu_{(B^y)}\ d\nu_{(y)}<\infty\ .
$
\end{center}
Therefore, there exists $y\in Y$ such that 
$0 < \mu_{(B^y)}<\infty$. Since $B^y\subseteq C$, the proof is complete.
\end{proof}

\section{Concluding remarks}

In this work, we have proved that the abelian property of the pair $(\mu,\nu)$ is a necessary and sufficient condition for the validity of Tonelli's Theorem.
Let us recall that, since 1937, in the classical book by Saks~\cite{saks1937}, it is assumed that the measures $\mu$ and $\nu$ are $\sigma$-finite. When the product measure is defined in a general setting by using Carathéodory's extension Theorem as in Rao~\cite{Rao2004} and Royden~\cite{RoydenFitzpatrick2023}, Tonelli's Theorem is also restricted to $\sigma$-finite measures (see~\cite{Rao2004} at p.\,385 and~\cite{RoydenFitzpatrick2023} at p.\,227).
Moreover, for a pair of s-finite measures,  the measure defined by Carathéodory’s extension
Theorem is not equal, in general, to the measure defined by iterated integrals.
Finally, the use of abelian pairs makes the theory technically simple, since the definitions and the theorems have similar structures.

\end{document}